\documentclass[reqno,12pt, oneside]{amsart}

\usepackage{enumerate}
\usepackage[nobysame]{amsrefs}
\usepackage{enumitem}
\usepackage{amssymb}
\usepackage{amsmath}
\usepackage{amsthm}
\usepackage{mathtools}
\usepackage{amscd}
\usepackage{microtype}
\usepackage[right=2cm,left=2cm, top=2.5cm, bottom=2.5cm,includehead]{geometry}
\usepackage{amsfonts}

\usepackage{tgpagella}
\usepackage{eulervm}

\usepackage[backgroundcolor=lightgray]{todonotes}

\theoremstyle{plain}
\newtheorem{theorem}{Theorem}
\newtheorem{corollary}[theorem]{Corollary}

\newtheorem{proposition}[theorem]{Proposition}

\theoremstyle{remark}
\newtheorem{remark}[theorem]{Remark}
\newtheorem{example}[theorem]{Example}

\theoremstyle{definition}

\DeclarePairedDelimiter{\abs}{\lvert}{\rvert}

\DeclarePairedDelimiter{\norm}{\lVert}{\rVert}

\DeclarePairedDelimiterX{\dset}[2]{\lbrace}{\rbrace}{#1\;\delimsize|\;#2}

\newcommand{\ball}{{\overline{B}}}

\DeclareMathOperator{\card}{card}

\title[When surjectivity demands pathology]{When surjectivity demands pathology}

\author{Piotr Kasprzak}
\address[P. Kasprzak]{Department of Nonlinear Analysis and Applied Topology\\
Faculty of Mathematics and Computer Science\\
  Adam Mickiewicz University in Pozna\'n\\
  ul.\ Uniwersytetu Pozna\'nskiego 4\\
  61-614 Pozna\'n\\
  Poland}
\email[P.~Kasprzak]{kasp@amu.edu.pl}

\author{Piotr Ma\'ckowiak}
\address[P. Ma\'ckowiak]{Department of Nonlinear Analysis and Applied Topology\\
Faculty of Mathematics and Computer Science\\
  Adam Mickiewicz University in Pozna\'n\\
  ul.\ Uniwersytetu Pozna\'nskiego 4\\
  61-614 Pozna\'n\\
  Poland}
\email[P.~Ma\'ckowiak]{piotr.mackowiak@amu.edu.pl}

\keywords{Acting conditions, cardinality, Hamel basis, injectivity, (nonlinear) composition operator, space of
sequences of bounded variation, surjectivity}
\subjclass[2010]{47H30, 46B45}

\date{\today}

\begin{document}
\begin{abstract}
In this paper, we characterize surjective nonlinear composition operators acting between the space of sequences of bounded variation and the space of bounded sequences. We show that their generators exhibit highly pathological behavior. To complement this, we construct explicit examples, confirming that such exotic objects do in fact exist.
\end{abstract}

\maketitle

\section{Introduction}

Consider a discrete dynamical system evolving in time, where the state is updated at each step. A basic example is a (random) walk in Euclidean space, where $x(n) \in \mathbb R^3$ denotes the position of a particle at time $n \in \mathbb N$. Such systems are often studied through path-dependent or energy-like quantities that governs their evolution. (For an introduction to random walks, see the classical monograph by Spitzer~\cite{FS}, and for a deep connection between probability theory (random walks) and electrical network theory, see~\cite{DS}.) 

A basic principle in classical mechanics is that the physical properties of an isolated system should not depend on the particular choice of reference point. Thus, rather than the absolute position of a particle, it is often the changes in its position that are physically relevant. From this perspective, it is natural to consider quantities that depend on increments of the state, with no dependence on the state itself. This leads naturally to expressions of the form 
\[
\sum_{n=1}^\infty\norm{x(n+1)-x(n)}
\]
rather than the classical $l^1$-quantity
\[
 \sum_{n=1}^\infty\norm{x(n)}.
\]
The space of sequences of bounded variation thus arises as a natural functional framework in which such systems may be studied (for the details see Section~\ref{sec:bv}).

This perspective becomes even more natural when one moves beyond Euclidean settings. In many applications, the state at each time step is not a finite-dimensional vector, but a function or another infinite-dimensional object. This occurs, for instance, in discrete analogues of the heat equation, or other important PDEs from engineering, where $x(n)$ represents the spatial heat profile at time $n$ (for more detail see, for example,~\cite{LangtangenLogg}*{Chapter~3}). In such situations, the general setting of Banach spaces provides a more convenient framework.

Now suppose that the system is not observed directly, and that the observer does not have access to the underlying sequence itself. Instead, the available measurements are obtained by applying some transformation to the state at each time step, resulting in a bounded observed signal. This leads to an inverse problem: when does an observed sequence arise from an underlying physical process through a given observation rule, and what structural properties of that rule ensure that such a reconstruction is possible?

Mathematically, this reduces to the study of a nonlinear composition operator between the space of sequences of bounded variation and the space of bounded sequences; surjectivity corresponds to the existence of a reconstruction, whereas bijectivity ensures its uniqueness.

As shown in Section~2, the generators of surjective composition operators of this type necessarily exhibit highly pathological behaviour. Nevertheless, Section~3 demonstrates that such generators do exist, with their construction relying crucially on the fact that, in infinite-dimensional settings, a Banach space and its Hamel basis have the same cardinality.

The paper sits naturally within ongoing research on superposition operators in various BV-type spaces (see, for example,~\cites{ABM, AZ, reinwand} and the references given therein).

\section{Preliminaries}

In this short section, we introduce the notation used throughout the paper and review basic facts concerning the space of sequences of bounded variation.

\subsection{Notation}

As usual, $\mathbb N$ denotes the set of positive integers and $\mathbb R$ is the set of real numbers. Following the standard convention, we write $A\subseteq B$ if the set $A$ is included in $B$, and $A\subset B$ if the inclusion is strict. Throughout the paper, $E$ denotes a real Banach space. For $u \in E$ and $r>0$ we write $\ball_E(u,r)$ and $B_E(u,r)$ for the closed and open balls in $E$, respectively, centred at $u$ and of radius $r$. Moreover, when $r=0$, we define $\ball_E(u,0):=\{u\}$. Finally, a set is said to be countable if it is in bijection with a subset of $\mathbb N$; in particular, the empty set is countable.

\subsection{Space of sequences of bounded variation}
\label{sec:bv}

A sequence $x \colon \mathbb N \to E$ is said to have \emph{bounded variation} if $\sum_{n=1}^\infty \norm{x(n+1)-x(n)}<+\infty$. The space of all such sequences is denoted by $bv(E)$. It is well-known that $bv(E)$ is a Banach space, when endowed with the norm
\[
 \norm{x}_{bv}:=\norm{x(1)}+\sum_{n=1}^\infty \norm{x(n+1)-x(n)}.
\]
Moreover, $bv(E)\subset c(E) \subset l^\infty(E)$, where $c(E)$ and $l^\infty(E)$ stand for the Banach spaces of all convergent and bounded sequences, respectively. Both are endowed with the supremum norm $\norm{x}_\infty :=\sup_{n \in \mathbb N} \norm{x(n)}$. Further information on the relationship between $bv(E)$ and other classical sequence spaces may be found, for example, in \cite{BK}*{Section~ 2.2} and the references therein.

We distinguish between sequences viewed as elements of $bv(E)$ or $l^\infty(E)$ and sequences of elements of the underlying space $E$. The former will be written in functional notation $n \mapsto x(n)$, whereas the latter will be denoted in the standard way by $(u_n)_{n \in \mathbb N}$.

\subsection{Composition operators}

Let $E^{\mathbb N}$ denote the linear space of all $E$-valued sequences. The (\emph{nonlinear}) \emph{composition operator} (also known as the \emph{autonomous superposition operator} or \emph{Nemytskii operator}) $C_f \colon  bv(E) \to E^{\mathbb N}$, generated by a map $f \colon E \to E$, assigns to each $x \in bv(E)$ the sequence $C_f(x)$ defined by $C_f(x)(n)=f(x(n))$ for $n \in \mathbb N$. Depending on the properties of $f$, the composition operator $C_f$ may map $bv(E)$ into a smaller sequence space. In particular, conditions ensuring that $C_f \colon bv(E) \to l^\infty(E)$ is well-defined were obtained in~\cite{BK}*{Theorem~3.7}. Here, we present a slight improvement of that result.

\begin{proposition}\label{prop:acting_conditions}
The following conditions are equivalent\textup:
\begin{enumerate}[label=\textup{(\roman*)}]
 \item\label{it:i} $C_f(bv(E))\subseteq l^\infty(E)$,
 \item\label{it:ii} $f$ is bounded on compact subsets of $E$,
 \item\label{it:iii} for every point $u \in E$ there exist radii $r,R>0$ such that $f(\ball_E(u,r))\subseteq \ball_E(0,R)$.
\end{enumerate}
\end{proposition}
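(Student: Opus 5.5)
I will prove the cycle of implications (ii)$\Rightarrow$(i)$\Rightarrow$(iii)$\Rightarrow$(ii).

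\textbf{(ii)$\Rightarrow$(i).} Fix $x \in bv(E)$. Since $bv(E)\subset c(E)$, the sequence $x(n)$ converges to some $u\in E$. Hence the set $K:=\{x(n): n\in\mathbb N\}\cup\{u\}$ is compact. By (ii), $f$ is bounded on $K$, so $\sup_n\norm{f(x(n))}<+\infty$, that is, $C_f(x)\in l^\infty(E)$.

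\textbf{(iii)$\Rightarrow$(ii).} This is a standard compactness argument. Let $K\subseteq E$ be compact. For each $u\in K$ choose $r_u,R_u>0$ as in (iii). The open balls $B_E(u,r_u)$ cover $K$, so finitely many of them, centred at $u_1,\dots,u_k$, already cover $K$. Then
\[
\sup_{v\in K}\norm{f(v)}\le\max_{1\le i\le k}R_{u_i}<+\infty .
\]

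\textbf{(i)$\Rightarrow$(iii).} This is the main step, and I argue by contraposition. Suppose (iii) fails at some $u\in E$. Then for every $r>0$ the set $f(\ball_E(u,r))$ is unbounded. I will build a sequence
\[
x(n):=u_n \quad\text{with}\quad \norm{u_n-u}\le 2^{-n}\quad\text{and}\quad \norm{f(u_n)}\ge n .
\]
Such $u_n$ exists because $f(\ball_E(u,2^{-n}))$ is unbounded. Then
\[
\norm{x(n+1)-x(n)}\le 2^{-n-1}+2^{-n},
\]
which is summable, so $x\in bv(E)$. On the other hand, $\norm{C_f(x)(n)}=\norm{f(u_n)}\ge n$, so $C_f(x)\notin l^\infty(E)$. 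This contradicts (i).

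The only delicate point in this step is ensuring the sequence has bounded variation, not merely that it converges. The geometric choice of radii handles this. No earlier results are needed beyond the inclusion $bv(E)\subset c(E)$.
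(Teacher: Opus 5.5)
Your proof is correct. Its key step, (i)$\Rightarrow$(iii), is the same as the paper's: choose points approaching $u$ fast enough (radii $2^{-n}$ for you, $1/n^2$ in the paper) that the resulting sequence lies in $bv(E)$ while $f$ is unbounded along it.

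The routes differ in how the remaining implications are obtained. The paper proves nothing involving (ii). It cites the equivalence (i)$\Leftrightarrow$(ii) from [BK, Theorem~3.7]. It then proves (iii)$\Rightarrow$(i) directly: all but finitely many terms of a convergent $x\in bv(E)$ lie in $\ball_E(u,r)$, which $f$ maps into $\ball_E(0,R)$.

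You instead close the cycle (ii)$\Rightarrow$(i)$\Rightarrow$(iii)$\Rightarrow$(ii). You use a finite subcover for (iii)$\Rightarrow$(ii), and the compactness of $\{x(n):n\in\mathbb N\}\cup\{u\}$ for (ii)$\Rightarrow$(i). This gives you two things:
\begin{enumerate}
\item Your argument is fully self-contained, since it reproves the cited result.
\item It isolates exactly where completeness of $E$ enters. Completeness is used only in (ii)$\Rightarrow$(i), through $bv(E)\subset c(E)$; the steps (i)$\Rightarrow$(iii) and (iii)$\Rightarrow$(ii) hold in any normed space. This matches the paper's remark that the characterization fails for incomplete $E$.
\end{enumerate}
The paper's version is shorter on the page, but it relies on the external reference for everything concerning compact sets.
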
   

\begin{proof}
The equivalence of the first two conditions was shown in~\cite{BK}*{Theorem~3.7}. Now, we will prove the implication $\ref{it:i}\Rightarrow\ref{it:iii}$. Suppose, on the contrary, that there is a point $u \in E$ such that $f(\ball_E(u,\frac{1}{n^2}))\not\subseteq \ball_E(0,n)$ for all $n \in \mathbb N$. Then, there exists a sequence $(w_n)_{n \in \mathbb N}$ such that $\norm{w_n-u}\leq \frac{1}{n^2}$ and $\norm{f(w_n)}>n$ for $n \in \mathbb N$. Set $x:=(w_1,w_2,w_3,\ldots)$. Since $\norm{x(n+1)-x(n)}\leq \norm{w_{n+1}-u}+\norm{u-w_n} \leq \frac{2}{n^2}$, we conclude that $x \in bv(E)$. However, $\norm{f(x(n))}=\norm{f(w_n)}>n$, so $C_f(x)\notin l^\infty(E)$, which is impossible.

The proof of the implication $\ref{it:iii}\Rightarrow\ref{it:i}$ is analogous to that of~\cite{BK}*{Theorem~3.7}. Take $x \in bv(E)$. Since $E$ is a Banach space, the sequence $x$ is convergent to some point $u \in E$. By~\ref{it:iii}, there exist radii $r,R>0$ such that $f(\ball_E(u,r))\subseteq \ball_E(0,R)$. 
But only finitely many terms of $x$ lie outside $\ball_E(u,r)$. Therefore, $C_f(x)\in l^\infty(E)$, proving that $\ref{it:iii}\Rightarrow\ref{it:i}$.
\end{proof}

\begin{remark}
It is worth noting that if $E$ is not complete, then the above characterization fails (see~\cite{BK}*{Example~3.4}).
\end{remark}

\section{Injectivity and surjectivity of $C_f$}

In this section, we study the function-theoretic properties of the composition operator
$C_f$ acting between the spaces $bv(E)$ and $l^\infty(E)$. Since it is defined pointwise,
the characterization of injectivity is straightforward. A simple argument yields the following result.

\begin{proposition}\label{prop:cf_injectivity}
Let $f \colon E \to E$ be a function that is bounded on compact subsets of $E$, and let
$C_f \colon bv(E) \to l^\infty(E)$ be the composition operator generated by $f$. Then, $C_f$ is injective if and only if $f$ is injective.
\end{proposition}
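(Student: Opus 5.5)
We prove both implications directly from the pointwise definition of $C_f$. First, by Proposition~\ref{prop:acting_conditions}, the assumption that $f$ is bounded on compact subsets of $E$ guarantees that $C_f$ is a well-defined map from $bv(E)$ into $l^\infty(E)$. So it only remains to compare injectivity of $f$ with injectivity of $C_f$.

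\emph{Sufficiency.} Assume $f$ is injective and let $x,y \in bv(E)$ satisfy $C_f(x)=C_f(y)$. This means $f(x(n))=f(y(n))$ for every $n\in\mathbb N$. Injectivity of $f$ then gives $x(n)=y(n)$ for all $n$, so $x=y$.

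\emph{Necessity.} Assume $C_f$ is injective and take $u,v\in E$ with $f(u)=f(v)$. We test $C_f$ on constant sequences. Let $x(n):=u$ and $y(n):=v$ for all $n\in\mathbb N$. Both have zero variation, so $x,y\in bv(E)$. Moreover $C_f(x)(n)=f(u)=f(v)=C_f(y)(n)$ for every $n$, so $C_f(x)=C_f(y)$. Injectivity of $C_f$ gives $x=y$, and comparing first terms yields $u=v$.

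There is no real obstacle here. The only point to check is that the test sequences used in the necessity part belong to $bv(E)$, and constant sequences clearly do. The boundedness hypothesis on $f$ plays no role in the injectivity argument itself; it is needed only so that $C_f$ maps into $l^\infty(E)$.
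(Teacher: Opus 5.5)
Your proof is correct and is exactly the simple pointwise argument the paper alludes to. The paper gives no written proof, only remarking that the characterization is straightforward because $C_f$ acts pointwise. Your use of constant sequences for the necessity direction is the natural way to make that remark rigorous.
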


We now turn to the second part of this section, where we characterize the surjectivity of $C_f$.

\begin{proposition}\label{prop:necessary_and_sufficient} 
Let $f \colon E \to E$ be a function that is bounded on compact subsets of $E$, and let
$C_f \colon bv(E) \to l^\infty(E)$ be the composition operator generated by $f$. The following conditions are equivalent\textup:
\begin{enumerate}[label=\textup{(\roman*)}]
 \item\label{it:sur_i} $C_f$ is surjective,

 \item\label{it:sur_ii} for every non-empty, countable and bounded subset $A$ of $E$ there exists a point $u \in E$ such that $A \subseteq f(\ball_E(u,r))$ for any $r>0$,

 \item\label{it:sur_iii} for every non-empty, countable and bounded subset $A$ of $E$ the intersection $\bigcap_{a \in A} \overline{f^{-1}(\{a\})}$ is non-empty.
\end{enumerate}
\end{proposition}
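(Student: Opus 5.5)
I will prove the cycle \ref{it:sur_i}$\Rightarrow$\ref{it:sur_iii}$\Rightarrow$\ref{it:sur_ii}$\Rightarrow$\ref{it:sur_i}. The equivalence \ref{it:sur_ii}$\Leftrightarrow$\ref{it:sur_iii} is essentially a reformulation. A point $u$ lies in $\overline{f^{-1}(\{a\})}$ exactly when every ball $\ball_E(u,r)$ with $r>0$ meets $f^{-1}(\{a\})$, that is, when $a\in f(\ball_E(u,r))$ for every $r>0$. Hence a point $u$ belongs to $\bigcap_{a\in A}\overline{f^{-1}(\{a\})}$ if and only if $A\subseteq f(\ball_E(u,r))$ for all $r>0$. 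So both implications between \ref{it:sur_ii} and \ref{it:sur_iii} follow at once, with the same $u$.

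For \ref{it:sur_i}$\Rightarrow$\ref{it:sur_iii}, I take a non-empty, countable, bounded set $A$. I enumerate it as a sequence $(a_k)_{k\in\mathbb N}$ in which every element of $A$ occurs infinitely often. If $A$ is finite, I simply repeat the elements cyclically; in general I use a surjection $\mathbb N\to A$ whose fibres are infinite. Define $y(n):=a_n$. Since $A$ is bounded, $y\in l^\infty(E)$. By surjectivity there is $x\in bv(E)$ with $f(x(n))=a_n$ for all $n$. Because $E$ is complete, $x$ converges to some $u\in E$. Now fix $a\in A$. There are infinitely many indices $n$ with $a_n=a$, and along these indices $x(n)\in f^{-1}(\{a\})$ and $x(n)\to u$. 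Therefore $u\in\overline{f^{-1}(\{a\})}$, and $u$ lies in the whole intersection.

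For \ref{it:sur_ii}$\Rightarrow$\ref{it:sur_i}, the step I regard as the main one, I take $y\in l^\infty(E)$ and let $A:=\{y(n):n\in\mathbb N\}$. This set is non-empty, countable and bounded, so \ref{it:sur_ii} supplies a point $u$ with $A\subseteq f(\ball_E(u,r))$ for every $r>0$. For each $n$ I choose $x(n)\in\ball_E(u,2^{-n})$ with $f(x(n))=y(n)$; this is possible because $y(n)\in A\subseteq f(\ball_E(u,2^{-n}))$. Then
\[
\norm{x(n+1)-x(n)}\le 2^{-n-1}+2^{-n},
\]
which is summable, so $x\in bv(E)$ and $C_f(x)=y$. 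The only care needed is to shrink the radii summably, and this is exactly why \ref{it:sur_ii} must hold for every $r>0$ and not just for some $r$. The standing assumption that $f$ is bounded on compact sets is used only to ensure, via Proposition~\ref{prop:acting_conditions}, that $C_f$ maps into $l^\infty(E)$. This completes the cycle.
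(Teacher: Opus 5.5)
Your proof is correct and follows essentially the same route as the paper. For necessity you use the same sequence in which every element of $A$ recurs infinitely often, together with the limit of its $bv$-preimage. For sufficiency you choose preimages in balls of summable radii, with $2^{-n}$ in place of the paper's $\frac{1}{n^2}$. The rest is cosmetic: you order the cycle as (i)$\Rightarrow$(iii)$\Rightarrow$(ii)$\Rightarrow$(i), and you spell out the closure-versus-balls reformulation that the paper uses tacitly when it calls (ii)$\Rightarrow$(iii) obvious and when it passes from (iii) to the points $w_n$.
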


\begin{proof}
Let $A$ be an arbitrary non-empty, countable and bounded subset of $E$, and let $a_1,a_2,\ldots$ be an enumeration of its elements. When $A$ is finite, we regard this enumeration as an infinite sequence by repeating its last element indefinitely. Define $y:=(a_1,a_1,a_2,a_1,a_2,a_3,a_1,a_2,a_3,a_4,\ldots)\in l^\infty(E)$, constructed in such a way that each element $a_n$, for $n \in \mathbb N$, appears infinitely many times in $y$. Since $C_f$ is surjective, there exists $x \in bv(E)$ such that $y=C_f(x)$. Let $u$ denote the limit of $x$; this limit exists, because $bv(E) \subset c(E)$, whenever $E$ is complete. Fix $r>0$. Then, there is an index $N \in \mathbb N$ such that $y(k)=C_f(x)(k) \in f(\ball_E(u,r))$ for all $k \geq N$. Now, for any element $a_n \in A$, we choose $j \geq N$ such that $a_n=y(j)$. It follows that $a_n \in f(\ball_E(u,r))$. Therefore, the implication  $\ref{it:sur_i}\Rightarrow\ref{it:sur_ii}$ holds.

The proof of the implication $\ref{it:sur_ii}\Rightarrow\ref{it:sur_iii}$ is obvious. 

It remains to show $\ref{it:sur_iii}\Rightarrow\ref{it:sur_i}$. Let $y \in l^\infty(E)$ and define $A:=\dset{y(n) \in E}{n \in \mathbb N}$. By~\ref{it:sur_iii} there exists a point $u \in E$ such that for every $n \in \mathbb N$ one can find an element $w_n \in \ball_E(u,\frac{1}{n^2})$ satisfying $y(n)=f(w_n)$. Define $x:=(w_1,w_2,w_3,\ldots)$. Then,
\[
 \norm{x(n+1)-x(n)}\leq \norm{w_{n+1}-u}+\norm{u-w_n} \leq \frac{2}{n^2}
\]
for every $n \in \mathbb N$. Hence, $x \in bv(E)$. Finally, it is immediate from the construction that $y=C_f(x)$.    
\end{proof}

\begin{remark}
Note that conditions~\ref{it:sur_ii} and~\ref{it:sur_iii} in Proposition~\ref{prop:necessary_and_sufficient} are in fact equivalent for arbitrary $A$, and do not require the assumption that $A$ is countable.  
\end{remark}

Every separable Banach space has the \emph{Lindel\"of property}, that is, every open cover of the space admits a countable subcover (see, for example,~\cite{willard}*{Definition~16.5 and Theorem~16.11}). Consequently, for separable Banach spaces, the preceding proposition extends from bounded and countable sets to arbitrary bounded sets.

\begin{proposition}\label{prop:necessary_and_sufficient_sep}
Assume that $E$ is a separable Banach space. Let $f \colon E \to E$ be a function that is bounded on compact subsets of $E$, and let $C_f \colon bv(E) \to l^\infty(E)$ be the composition operator generated by $f$. The following conditions are equivalent\textup:
\begin{enumerate}[label=\textup{(\roman*)}]
 \item\label{it:sur_sep_i} $C_f$ is surjective,

 \item\label{it:sur_sep_ii} for every non-empty and bounded subset $A$ of $E$ there exists a point $u \in E$ such that $A \subseteq f(\ball_E(u,r))$ for any $r>0$,

 \item\label{it:sur_sep_iii} for every non-empty and bounded subset $A$ of $E$ the intersection $\bigcap_{a \in A} \overline{f^{-1}(\{a\})}$ is non-empty.
\end{enumerate}
\end{proposition}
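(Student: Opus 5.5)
Using Proposition~\ref{prop:necessary_and_sufficient}, it suffices to upgrade conditions~\ref{it:sur_ii} and~\ref{it:sur_iii} of that result from countable bounded sets to arbitrary bounded sets. The implications \ref{it:sur_sep_ii}$\Rightarrow$\ref{it:sur_sep_iii} and \ref{it:sur_sep_iii}$\Rightarrow$\ref{it:sur_sep_i} are immediate. For the first, if $A\subseteq f(\ball_E(u,r))$ for every $r>0$, then each $a\in A$ has preimages arbitrarily close to $u$, so $u\in\overline{f^{-1}(\{a\})}$. For the second, condition~\ref{it:sur_sep_iii} restricted to countable bounded sets is exactly condition~\ref{it:sur_iii} of Proposition~\ref{prop:necessary_and_sufficient}, which yields surjectivity. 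Thus the real content is \ref{it:sur_sep_i}$\Rightarrow$\ref{it:sur_sep_ii}.

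For that implication, fix a non-empty bounded set $A\subseteq E$. Since $E$ is separable, $A$ is separable as a metric space. I will choose a countable dense subset $D\subseteq A$, which is still bounded and non-empty. By Proposition~\ref{prop:necessary_and_sufficient} there is $u\in E$ with $D\subseteq f(\ball_E(u,r))$ for every $r>0$. It then remains to pass from $D$ to all of $A$.

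This passage is the main obstacle. The map $f$ has no continuity, so density of $D$ in $A$ does not let us transfer the property from $D$ to $A$. Some trick is needed that handles all points of $A$ simultaneously. I would therefore argue by contradiction, which is where the Lindel\"of property mentioned before the statement enters. Suppose that for every $u\in E$ there are $r_u>0$ and $a_u\in A$ with $a_u\notin f(\ball_E(u,r_u))$. The open balls $B_E(u,r_u)$, $u\in E$, cover $E$. By the Lindel\"of property there is a countable subcover $B_E(u_k,r_{u_k})$, $k\in\mathbb N$.

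Set $A_0:=\dset{a_{u_k}}{k\in\mathbb N}$, a countable, bounded and non-empty subset of $A$. By Proposition~\ref{prop:necessary_and_sufficient} there is $v\in E$ with $A_0\subseteq f(\ball_E(v,r))$ for all $r>0$. Now $v$ lies in some $B_E(u_k,r_{u_k})$, so for a small enough $r>0$ we have $\ball_E(v,r)\subseteq B_E(u_k,r_{u_k})\subseteq\ball_E(u_k,r_{u_k})$. Hence $a_{u_k}\in f(\ball_E(v,r))\subseteq f(\ball_E(u_k,r_{u_k}))$, contradicting the choice of $a_{u_k}$. This proves \ref{it:sur_sep_ii}, and the countable-dense-subset attempt above is not needed.
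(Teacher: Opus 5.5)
Your proof is correct and is essentially the paper's argument: both apply the Lindel\"of property to an open cover witnessing the failure of the condition for an arbitrary bounded $A$, extract a countable subfamily, and contradict Proposition~\ref{prop:necessary_and_sufficient}. The only difference is cosmetic. You cover $E$ by the balls $B_E(u,r_u)$ coming from condition~(ii), whereas the paper covers it by the sets $E\setminus\overline{f^{-1}(\{a\})}$ from condition~(iii), and your balls refine that cover. Your explicit cycle of implications also spells out the step (iii)$\Rightarrow$(ii) for arbitrary $A$, which the paper leaves to the preceding remark.
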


\begin{proof}
Note that it suffices to show that condition~\ref{it:sur_iii} of Proposition~\ref{prop:necessary_and_sufficient} implies~\ref{it:sur_sep_iii} here. Let $A$ be a non-empty and bounded subset of $E$, and for each $a \in A$ set $F_a:=\overline{f^{-1}(\{a\})}$. Suppose that $\bigcap_{a \in A} F_a = \emptyset$. Then, $\bigcup_{a \in A} (E\setminus F_a)=E$. Since $E$ is separable, it has the Lindel\"of property, so there exists a (possibly finite) sequence $(a_n)_{n \in N} \subseteq A$, where $N \subseteq \mathbb N$, such that $\bigcup_{n \in N} (E\setminus F_{a_n})=E$. Equivalently, $\bigcap_{n \in N} F_{a_n}=\emptyset$, which contradicts~\ref{it:sur_iii} of Proposition~\ref{prop:necessary_and_sufficient}. This completes the proof.
\end{proof}

From the results above, we can readily deduce the following properties of the generators of surjective composition operators between $bv(E)$ and $l^\infty(E)$. Except for the first one, they are all rather pathological.

\begin{corollary}\label{cor:properties}
Let $f \colon E \to E$ be a function that is bounded on compact subsets of $E$, and let $C_f \colon bv(E) \to l^\infty(E)$ be the surjective composition operator generated by $f$. Then,
\begin{enumerate}[label=\textup{(\roman*)}]
 \item $f$ is surjective,

 \item\label{it:cor_prop_ii} for any distinct $u,w \in E$ the intersection $\overline{f^{-1}(\{u\})} \cap \overline{f^{-1}(\{w\})}$ is non-empty, and $f$ is discontinuous at every point of this intersection,

\item $\inf_{a\in f^{-1}(\{u\})} \inf_{b\in f^{-1}(\{w\})} \norm{a-b}=0$ for all $u,w \in E$,

 \item for every $u \in E$ and $r>0$ we have $f\bigl( \bigcup_{a \in f^{-1}(\{u\})} \ball_E(a,r)\bigr) =E$.
\end{enumerate}
\end{corollary}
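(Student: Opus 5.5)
Each item will be derived from Proposition~\ref{prop:necessary_and_sufficient}, applying condition~\ref{it:sur_ii} or~\ref{it:sur_iii} to suitably chosen finite sets $A$. A finite set is countable and bounded, so both conditions are available.

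For (i), I take $A=\{v\}$ for an arbitrary $v\in E$. Condition~\ref{it:sur_iii} then says $\overline{f^{-1}(\{v\})}\neq\emptyset$, so $f^{-1}(\{v\})\neq\emptyset$ and $v$ lies in the range of $f$.

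For (ii), I apply~\ref{it:sur_iii} to $A=\{u,w\}$ and obtain a point $p\in\overline{f^{-1}(\{u\})}\cap\overline{f^{-1}(\{w\})}$. To show discontinuity at $p$, I pick sequences $a_n\to p$ with $f(a_n)=u$ and $b_n\to p$ with $f(b_n)=w$. If $f$ were continuous at $p$, we would get $f(p)=u$ and $f(p)=w$, which contradicts $u\neq w$.

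For (iii), the case $u=w$ is trivial: since $f^{-1}(\{u\})\neq\emptyset$ by (i), one can take $a=b$. For $u\neq w$, I take $p$ as in (ii). For any $\varepsilon>0$ there are $a\in f^{-1}(\{u\})$ and $b\in f^{-1}(\{w\})$ with $\norm{a-p},\norm{b-p}<\varepsilon/2$, so $\norm{a-b}<\varepsilon$. Letting $\varepsilon\to0$ gives the claim.

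Item (iv) needs the most care, because it must hold for every target point $v$ simultaneously. Fix $u$, $r>0$ and $v\in E$; I need $v\in f\bigl(\bigcup_{a\in f^{-1}(\{u\})}\ball_E(a,r)\bigr)$. I apply~\ref{it:sur_ii} to $A=\{u,v\}$, which gives a point $p$ with $A\subseteq f(\ball_E(p,r/2))$. So there exist $a\in\ball_E(p,r/2)$ with $f(a)=u$ and $c\in\ball_E(p,r/2)$ with $f(c)=v$. Then $\norm{c-a}\le r$, hence $c\in\ball_E(a,r)$ with $a\in f^{-1}(\{u\})$, and $v=f(c)$ lies in the required image. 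The reverse inclusion into $E$ is trivial. The only subtlety is to choose the radius $r/2$ in~\ref{it:sur_ii} so that the triangle inequality lands inside the closed ball of radius $r$.
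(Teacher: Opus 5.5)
Your proposal is correct and matches the paper's intended argument: the paper gives no separate proof and simply notes that these properties follow readily from Proposition~\ref{prop:necessary_and_sufficient}. Your proof does exactly that, applying conditions~\ref{it:sur_ii} and~\ref{it:sur_iii} to one- and two-point sets, with the radius $r/2$ in item (iv) handled correctly.
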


Finally, combining the results on injectivity and bijectivity of the composition operator, we obtain the following rather surprising fact, all the more striking given that $bv(E)$ and $l^\infty(E)$ have the same cardinality.

\begin{corollary}
Let $f \colon E \to E$ be a map that is bounded on compact subsets of $E$. No composition operator $C_f \colon bv(E) \to l^\infty(E)$ is bijective. 
\end{corollary}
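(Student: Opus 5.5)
We argue by contradiction: suppose some $C_f \colon bv(E) \to l^\infty(E)$ is bijective, i.e.\ both injective and surjective. (The statement implicitly assumes $E \neq \{0\}$; for the trivial space both sequence spaces are singletons and $C_f$ is trivially bijective, so we fix a nonzero Banach space $E$.) The plan is to combine Proposition~\ref{prop:cf_injectivity} with the surjectivity characterization in Proposition~\ref{prop:necessary_and_sufficient}, or more directly with Corollary~\ref{cor:properties}\ref{it:cor_prop_ii}, and derive a contradiction from the structure of the fibres of $f$.

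First, injectivity of $C_f$ gives, by Proposition~\ref{prop:cf_injectivity}, that $f$ is injective, so every fibre $f^{-1}(\{u\})$ has at most one point. Surjectivity of $C_f$ forces $f$ to be surjective by Corollary~\ref{cor:properties}, so each fibre is a singleton. A singleton is closed, hence $\overline{f^{-1}(\{u\})} = f^{-1}(\{u\})$.

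Next, pick two distinct points $u,w\in E$, which is possible since $E\neq\{0\}$, and apply condition~\ref{it:sur_iii} of Proposition~\ref{prop:necessary_and_sufficient} to the countable bounded set $A=\{u,w\}$. This yields a point lying in both $f^{-1}(\{u\})$ and $f^{-1}(\{w\})$, i.e.\ a point $p$ with $f(p)=u$ and $f(p)=w$. That is absurd because $u\neq w$.

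There is no real obstacle here. The only point requiring care is that closures of singletons are the singletons themselves, which holds in any metric space, together with the nontriviality assumption on $E$.
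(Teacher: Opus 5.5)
Your proof is correct and follows the paper's argument. Injectivity of $f$ (Proposition~\ref{prop:cf_injectivity}) makes every fibre a closed singleton, so distinct fibres have disjoint closures, which contradicts Corollary~\ref{cor:properties}\ref{it:cor_prop_ii}, equivalently condition~\ref{it:sur_iii} of Proposition~\ref{prop:necessary_and_sufficient} applied to $A=\{u,w\}$. Your caveat that $E\neq\{0\}$ is also right: for the trivial space the statement fails, and the paper leaves this assumption implicit.
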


\section{Examples}
A natural question is whether there exist maps $f$ that generate surjective composition operators between $bv(E)$ and $l^\infty(E)$. The answer is positive: such maps do indeed exist. In what follows, we present a concrete example of one such mapping.

\begin{example}\label{eq:example_PK}
Let $E$ be an infinite-dimensional real Banach space, and let $H$ be its Hamel basis. Without loss of generality we may assume that every element of the basis has norm $1$. Set
\[
 Q:=\dset[\Bigg]{\sum_{i=1}^m q_i h_i}{ \text{$q_i \in \mathbb Q$ and $h_i \in H$ for $i=1,\ldots,m$, where $m \in \mathbb N$ is arbitrary}}.
\]
The relation $\sim$ defined for any two elements $u,w \in E$ by
\[
 \text{$u \sim w$ if and only if $u-w \in Q$}
\]
is an equivalence relation. By $[u]$ let us denote the equivalence class of $u \in E$, that is, the set of all points $w \in E$ such that $u \sim w$.

Now, we will show that each equivalence class is dense in $E$, that is, for every radius $r>0$ and all points $u,w \in E$ the intersection $B_E(u,r) \cap [w]$ is non-empty. Let $h_1,\ldots,h_m \in H$ and $p_1,\ldots,p_m \in \mathbb R$ be such that $u-w=\sum_{i=1}^m p_i h_i$. Choose $q_1,\ldots,q_m \in \mathbb Q$ so that $\abs{p_i-q_i}< \frac{r}{m}$ for $i=1,\ldots,m$. Then, $v:=w+\sum_{i=1}^m q_i h_i \in [w]$ and
\[
 \norm{u-v}\leq \sum_{i=1}^m \abs{p_i-q_i}\cdot \norm{h_i} = \sum_{i=1}^m \abs{p_i-q_i} <r.
\] 
Thus, $v \in B_E(u,r)$, proving that $B_E(u,r) \cap [w] \neq \emptyset$. In particular, for every $n \in \mathbb N$ and $u \in E$ the set $[u]_n :=[u] \cap \bigl( B_E(0,n)\setminus \ball_E(0,n-1)\bigr)$ is dense in $B_E(0,n)\setminus \ball_E(0,n-1)$. 

Fix $n \in \mathbb N$, and let $A_n := \dset{[u]_n}{u \in E}$.  It is clear that $\card A_n \leq \card \ball_E(0,n) \leq \card E$. To show that the sets $A_n$, $\ball_E(0,n)$ and $E$ have the same cardinality, choose a non-rational number $p$ and consider a map $\psi \colon H \to A_n$ defined by $\psi(h):=[ph]_n$. The map $\psi$ is an injection, as if $g$ and $h$ are two distinct elements of the Hamel basis, then $[p g]\cap [p h] = \emptyset$. Indeed, suppose, to the contrary, that 
\[
 pg-ph = \sum_{i=1}^m q_i h_i
\]
for some $q_i \in \mathbb Q$ and $h_i \in H$. Without loss of generality, we may assume that $g,h$ are among $h_1,\ldots,h_m$; say, $g=h_1$ and $h=h_2$. Then, $(p-q_1)h_1 + (-p-q_2)h_2 + q_3h_3+\ldots+q_m h_m=0$. Since $p-q_1\neq 0$ and $p+q_2 \neq 0$ this implies that the vectors $h_1,\ldots,h_m$ are not linearly independent -- a contradiction. Consequently, $\card H \leq \card A_n$. By~\cite{HH}*{Theorem~3.5} we also know that $\card E = \card H$. The Cantor--Bernstein theorem, along with the previous inequalities, therefore yields $\card A_n = \card \ball_E(0,n)$ for every $n \in \mathbb N$ (see~\cite{willard}*{Problem~1J}). Hence, there exists a bijection $\varphi_n \colon A_n \to \ball_E(0,n)$. Additionally, as $E$ is an infinite-dimensional Banach space, by Riesz's lemma it admits a sequence $(e_k)_{k \in \mathbb N}$ of unit vectors such that $\norm{e_k - e_l}\geq \frac{1}{2}$ for $k\neq l$ (cf.~\cite{kreyszig}*{Theorem~2.5-4}).  

We are ready to define the function $f \colon E \to E$. Set
\[
 f(u):=\begin{cases}
         ke_k, & \text{if $u=e_k$ for some $k \in \mathbb N$,}\\
				 \varphi_n\bigl([u]_n\bigr), & \text{if $u \in B_E(0,n)\setminus \ball_E(0,n-1)$ for some $n \in \mathbb N$,}\\
				 0, & \text{otherwise}.
      \end{cases}
\]
Note that $f$ is bounded on compact subsets of $E$, because any compact subset contains at most finitely many vectors $e_k$. Consequently, $f$ generates the composition operator $C_f$ which acts between $bv(E)$ and $l^\infty(E)$. Furthermore, due to the density of each equivalence class in $E$, the map $f$ satisfies also condition~\ref{it:sur_ii} of Proposition~\ref{prop:necessary_and_sufficient}, meaning that $C_f$ is a surjection. Note also that in view of Corollary~\ref{cor:properties}~\ref{it:cor_prop_ii} the generator $f$ is discontinuous at every point of $E$.
\end{example}

\begin{remark}
The vectors $e_k$ are not essential for the definition of $f$. They are introduced only to illustrate that $f$ can be made bounded on compact subsets of $E$, yet unbounded on bounded ones.
\end{remark}

\begin{remark}
The above example works in infinite-dimensional spaces. However, it can be modified slightly to work in finite-dimensional spaces as well. We begin with $E:=\mathbb R$. We also take $H:=\{1\}$ and $Q:=\mathbb Q$. Since each equivalence class $[u]$ is merely a translation of the set $\mathbb Q$, it follows that the cardinality of every family $A_n$ must be continuum. Therefore, $\card A_n =\card \ball_{\mathbb R}(0,n)$ for $n \in \mathbb N$. Now, it suffices to define the map $f$ using a similar formula to the one used in Example~\ref{eq:example_PK}, but without the part involving the sequence of unit vectors.    

In higher dimensions, we use the analogous construction along a given line in $\mathbb R^N$. Outside of this line, we define the map to be zero.
\end{remark}

The general example of a map satisfying condition~\ref{it:sur_ii} of Proposition~\ref{prop:necessary_and_sufficient} and being bounded on compact sets is not constructive due to the nature of the bijections $\varphi_n$. However, in the case $E:=\mathbb R$, a simpler example can be provided.

\begin{example}\label{ex:example_PM}
Let $(w_j)_{j \in \mathbb N}$ be a sequence of positive numbers that sum up to $1$. Also, let $s_k:=w_1+\ldots+w_k$ for $k \in \mathbb N$. Define the function $f \colon \mathbb R \to \mathbb R$ by the following formula
\[
 f(u):=\begin{cases}
       -n, &  \text{if $u=n+s_{2k}$ for $n,k \in \mathbb N$,}\\
			  n, &  \text{if $u=n+s_{2k+1}$ for $n,k \in \mathbb N$,}\\
				\text{linear}, & \text{on the interval $[n+s_{2k},n+s_{2k+1}]$, where $n,k \in \mathbb N$},\\
				0, & \text{otherwise}.
			\end{cases}
\]
Clearly, the function $f$ is bounded on compact subsets of $\mathbb R$, as $f([-(n+1),(n+1)]) = [-n,n]$ for $n \in \mathbb N$. Additionally, it satisfies condition~\ref{it:sur_ii} of Proposition~\ref{prop:necessary_and_sufficient} with $u$ being a sufficiently large positive integer.
\end{example}

We conclude the paper with two further examples showing that, in a certain sense, Propositions~\ref{prop:necessary_and_sufficient} and~\ref{prop:necessary_and_sufficient_sep} are optimal. First, we show that in Proposition~\ref{prop:necessary_and_sufficient} the countability requirement in condition~\ref{it:sur_iii} cannot, in general, be replaced by the weaker requirement that the corresponding intersection be non-empty for every non-empty finite subset $A$. Second, we show that the separability of $E$ in Proposition~\ref{prop:necessary_and_sufficient_sep} cannot, in general, be dispensed with.

\begin{example}
Let $A:=\dset{\frac{1}{m}}{m \geq 2}$. Since $A$ is countable, the family of all its non-empty finite subsets is countable as well. Let $A_1,A_2,\ldots$ be an enumeration of these subsets.

Our goal is to construct a function $g\colon \mathbb R\to\mathbb R$ such that for every finite set $B \subset \mathbb R$ the intersection $\bigcap_{b\in B}\overline{g^{-1}(\{b\})}$ is non-empty, whereas for the whole set $A$ we have $\bigcap_{a\in A}\overline{g^{-1}(\{a\})}=\emptyset$. To achieve this, we slightly modify the function $f$ introduced in Example~\ref{ex:example_PM}. For each interval $[n+s_{2k},n+s_{2k+1}]$, where $n,k\in\mathbb N$, we proceed as in the construction of $f$, connecting the points $(n+s_{2k},-n)$ and $(n+s_{2k+1},n)$ by a line segment. This time, however, we introduce ``holes'' in the graph by setting $g(u):=0$ whenever $u\in (n,n+1) \cap f^{-1}(A\setminus A_n)$. Then, the required properties of $g$ can be deduced easily from the construction.

Since $g$ is bounded on bounded subsets of $\mathbb R$, it generates a composition operator $C_g\colon bv(\mathbb R)\to l^\infty(\mathbb R)$. However, $C_g$ is not surjective. Indeed, consider the bounded sequence $y:=(\frac12,\frac13,\ldots)$. By the construction of $g$, every sequence $x$ satisfying $C_g(x)=y$ must be unbounded. Consequently, it cannot belong to $bv(\mathbb R)$, since $bv(\mathbb R)\subset l^\infty(\mathbb R)$.
\end{example}

\begin{example}
Consider $E:=l^\infty(\mathbb R)$, and let $H$ be its Hamel basis. Without
loss of generality, we may assume that every element of the basis has norm $1$. We also consider the equivalence relation introduced in Example~\ref{eq:example_PK}. Additionally, let $S$ denote the subset of $E$ consisting of all zero-one sequences, and let $P$ be the family of all non-empty countable subsets of $S$. By standard cardinal arithmetic, each of the sets $E$, $H$, $S$, and $P$ has cardinality continuum (cf. also~\cite{HH}*{Theorem~3.5}). Hence, we may index the elements of $P$ by elements of $S$, writing $P=\dset{P_s}{s \in S}$.

Arguing as in Example~\ref{eq:example_PK}, one can show that for every $n\in\mathbb N$ and $u\in E$, the set
\[
 [u]_n := [u] \cap \biggl( B_E(0,n) \setminus \biggl( \ball_E(0,n-1) \cup \bigcup_{s \in S} \ball_{E}\bigl((n-\tfrac{1}{2})s ,\tfrac{1}{8}\bigr)\biggr)\biggr)
\]
is dense in
\[
B_E(0,n) \setminus \biggl( \ball_E(0,n-1) \cup \bigcup_{s \in S} \ball_{E}\bigl((n-\tfrac{1}{2})s ,\tfrac{1}{8}\bigr)\biggr),
\]
and that the family $A_n:=\dset{[u]_n}{u \in E}$ has cardinality continuum. Since $\ball_E(0,n)\setminus S$ also has cardinality continuum there exists a bijection $\varphi_n \colon A_n \to \ball_E(0,n)\setminus S$.

Similarly, for every $n\in\mathbb N$, $s\in S$, and $u\in E$, the set
\[
 \langle u \rangle_{n,s}:= [u] \cap \ball_{E}\bigl((n-\tfrac{1}{2})s ,\tfrac{1}{8}\bigr)
\]
is dense in $\ball_{E}\bigl((n-\tfrac{1}{2})s ,\tfrac{1}{8}\bigr)$. Let $B_{n,s}:=\dset{\langle u \rangle_{n,s}}{u \in E}$. Again, $B_{n,s}$ has cardinality continuum. Therefore, we may fix a surjection $\psi_{n,s}$ of $B_{n,s}$ onto $P_s$, as $P_s$ is countable.

We define the function $f \colon E \to E$ by
\[
 f(u):=\begin{cases}
         ke_k, & \text{if $u$ is the $k$-th unit vector $e_k$ for some $k \in \mathbb N$,}\\
				 \varphi_n\bigl([u]_n\bigr), & \text{if $\displaystyle u \in B_E(0,n) \setminus \biggl( \ball_E(0,n-1) \cup \bigcup_{s \in S} \ball_{E}\bigl((n-\tfrac{1}{2})s ,\tfrac{1}{8}\bigr)\biggr)$ for some $n \in \mathbb N$,}\\
				 \psi_{n,s}\bigl(\langle u \rangle_{n,s}\bigr), & \text{if $\displaystyle u \in \ball_{E}\bigl((n-\tfrac{1}{2})s ,\tfrac{1}{8}\bigr)$ for some $n \in \mathbb N$ and $s \in S$,}\\
				 0, & \text{otherwise}.
      \end{cases}
\]
Note that $f$ is indeed well-defined, since the balls $\ball_{E}\bigl((n-\tfrac{1}{2})s ,\tfrac{1}{8}\bigr)$ are disjoint for different $n \in \mathbb N$ and $s \in S$. By construction, for every non-empty, countable and bounded subset $C$ of $E$ the intersection $\bigcap_{c \in C} \overline{f^{-1}(\{c\})}$ is non-empty, whereas 
\[
  \bigcap_{s\in S} \overline{f^{-1}(\{s\})} = \emptyset.
\]
\end{example}

\subsection*{Use of Large Language Models}
All mathematical results, proofs, and arguments presented in this work were independently derived, fully formalized, and rigorously verified by the authors. No statement in this paper was accepted on the basis of model output alone, and the LLM was not used as a source of validated mathematical claims. Its role was limited to informal assistance in discussion and writing.

\end{document}